\newcommand{\C}{\mathbb{C}}
\begin{document}

\section{Introduction}

For $1\leq k \leq n$, let $A(n,k)$ denote the Eulerian number, that is the number of permutations of size $n$ with exactly $k-1$ descents. The Eulerian polynomials are 
\begin{equation*}
	A_n(x) = \sum_{k=1}^n A(n,k)x^k.
\end{equation*}
It is a classical result that this polynomial has $n$ distinct real roots, see for instance~\cite[Theorem~1.34]{Bona}. Let us denote these roots by $x_{n,1} < x_{n,2} < \dots < x_{n,n} = 0$. We are interested in their empirical measure, for which we consider $-x_{n,k}$ instead, so as to work with positive values:
\begin{equation*}
	\mu_n = \frac{1}{n} \sum_{k=1}^n \delta_{-x_{n,k}}.
\end{equation*}
The asymptotic behaviour of $\mu_n$ was described by Sobolev \cite{Sobolev} and Sirazhdinov \cite{Sirazhdinov}, who used analytic methods to get the following theorem. In this paper we give a new, combinatorial proof of this result.
\begin{theorem}[\cite{Sobolev,Sirazhdinov}]
	\label{theo:distri}
	As $n\to \infty$, the sequence of measures $\mu_n$ converges weakly to a probability measure $\mu$ with support $[0,\infty)$. This measure is the distribution of $\exp(\pi Z)$ where $Z$ is a standard Cauchy random variable. That is, $\mu$ has density
	\begin{equation*}
		\frac{1}{t \left(\pi^2 + \log^2 t\right)} 1_{t>0}.
	\end{equation*}
\end{theorem}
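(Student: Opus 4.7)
The limit $\mu$ is heavy-tailed (log-Cauchy) and has no finite moments, so the classical method of moments cannot be applied to the $\mu_n$ directly. My plan is to conjugate the problem by a Cayley-type change of variable that compactifies the support, introduce a related family of polynomials whose roots lie in a compact interval, and then apply the moment method on that side, identifying the asymptotic moments in terms of N\"orlund's numbers.

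Concretely, let $\psi(t) := (1-t)/(1+t)$, a homeomorphism from $[0, \infty)$ onto $(-1, 1]$. The transformed values $y_{n,k} := \psi(-x_{n,k}) = (1+x_{n,k})/(1-x_{n,k}) \in (-1, 1]$ are the zeros of the polynomial
\[
B_n(y) \;:=\; (1+y)^n\, A_n\!\left(\tfrac{y-1}{y+1}\right)
\]
of degree $n$; the reciprocal symmetry $A_n(x) = x^{n+1} A_n(1/x)$ translates into the sign involution $y \leftrightarrow -y$, so the roots of $B_n$ are symmetric about $0$ apart from the fixed root $y = 1$ coming from $x_{n,n} = 0$. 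Weak convergence $\mu_n \to \mu$ is equivalent to weak convergence of $\nu_n := \tfrac{1}{n}\sum_k \delta_{y_{n,k}}$ to $\psi_*\mu$ on the compact interval $[-1, 1]$; on this side all moments of $\nu_n$ exist, and Hausdorff's moment problem reduces weak convergence to convergence of every moment.

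From the classical exponential generating function of the Eulerian polynomials I derive a hyperbolic closed form
\[
\sum_{n \ge 1} B_n(y)\,\frac{s^n}{n!} \;=\; \frac{(y-1)\sinh s}{\cosh s - y\sinh s},
\]
from which the power sums $p_j^{(n)} := \sum_k y_{n,k}^j$ can be extracted via Newton's identities on the coefficients of $B_n$, or equivalently through the logarithmic derivative $B_n'(y)/B_n(y)$. The sign-symmetry makes every \emph{odd} power sum equal to $1$ for all $n \ge 1$ (the nontrivial roots cancel in $\pm$ pairs and only the fixed root $y=1$ contributes), so the content lies entirely in the even power sums. Expanding the generating function in powers of $y$ and using the classical expansion of $\tanh s$ in Bernoulli numbers should naturally produce coefficients built from N\"orlund's numbers. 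The key technical claim is that each $p_j^{(n)}$ becomes an explicit \emph{affine} function $\alpha_j n + \beta_j$ of $n$ for all $n$ beyond a threshold depending on $j$ (this is the precise sense of ``ultimately constant'' once one divides by $n$), with the slope $\alpha_j$ given by an explicit N\"orlund-number formula.

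Once convergence of $m_j^{(n)} := p_j^{(n)}/n \to \alpha_j$ is established, Hausdorff's moment problem on $[-1,1]$ produces a unique limit $\nu_\infty$. To identify $\nu_\infty$ with $\psi_*\mu$, I compute the moments of $\psi_*\mu$ directly from the stated density $1/(t(\log^2 t + \pi^2))$ via the substitution $t = \psi^{-1}(y)$: the pushforward is the law of $-\tanh(\pi Z/2)$ with $Z$ standard Cauchy, whose even moments should involve exactly the same N\"orlund quantities. Pulling back through $\psi$ then yields $\mu_n \to \mu$. The main obstacle is the combinatorial core — deriving the stabilized slopes $\alpha_j$ from the hyperbolic EGF and matching them with N\"orlund's numbers on both sides of the identification $\nu_\infty = \psi_*\mu$ — together with a mild tightness check at the boundary point $y = -1$, which corresponds to $t \to \infty$ and encodes the tail of $\mu$ at $+\infty$.
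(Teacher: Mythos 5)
Your overall architecture --- compactify the root distribution by a M\"obius change of variable, run the method of moments on the compact side, and identify the limit through N\"orlund numbers --- is exactly the architecture of the paper. In fact your variable $y_{n,k}=(1+x_{n,k})/(1-x_{n,k})$ is precisely the affine shift $y=2u-1$ of the paper's $u_{n,k}=\frac{1}{1-x_{n,k}}\in(0,1]$, and your setup is sound as far as it goes: the hyperbolic generating function $\sum_{n\ge 1}B_n(y)\frac{s^n}{n!}=\frac{(y-1)\sinh s}{\cosh s-y\sinh s}$ is correct for the convention $A_n(x)=\sum_{k=1}^nA(n,k)x^k$ used here, the reciprocal symmetry $A(n,n+1-k)=A(n,k)$ does become $y\leftrightarrow-y$, the odd power sums do all equal $1$, and the claimed affine dependence of the even power sums on $n$ is true --- it is equivalent, via the binomial expansion of $(2u-1)^{2j}$, to Theorem~\ref{theo:moments}, which gives $\sum_k u_{n,k}^p=(n+1)\frac{(-1)^pN_p}{p!}$ exactly for all $1\le p\le n$.

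The gap is that the two steps carrying all the mathematical weight are stated as expectations (``should naturally produce'', ``should involve exactly the same N\"orlund quantities'') rather than proved. First, the stabilization of the power sums with explicit N\"orlund slopes is the combinatorial core of the whole result; in the paper it occupies all of Section~\ref{sec:symu} and requires expressing the elementary symmetric functions of the $u_{n,k}$ through Stirling numbers of the second kind (Lemma~\ref{le:symu}), a new Stirling--N\"orlund identity (Lemma~\ref{le:st_n}) proved through the generating function $\frac{t}{(1+t)\log(1+t)}$, and an induction via Newton's identities. Extracting the same information directly from your hyperbolic EGF is plausible, but you have not exhibited the mechanism, and ``expanding in powers of $y$ and using the Bernoulli expansion of $\tanh$'' is not yet an argument. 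Second, identifying the limit requires proving that the even moments of $-\tanh(\pi Z/2)$, with $Z$ standard Cauchy, equal your slopes $\alpha_{2j}$; this is another nontrivial computation that you only assert. (The paper goes the other way: it resums the N\"orlund moments into the Stieltjes transform of $\nu$ and inverts it, thereby \emph{deriving} the density rather than verifying a guessed one.) A minor point: the ``tightness check at $y=-1$'' is automatic since all $\nu_n$ live on the compact set $[-1,1]$; what you actually need there is that the limit puts no mass at $y=-1$, so that $\psi^{-1}$, which blows up at $-1$, can be fed into the continuous mapping theorem to return to $\mu_n\to\mu$. Until the two main claims are carried out, the proposal is a plan rather than a proof.
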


\begin{figure}[ht]
    \centering
    \includegraphics[scale=0.6]{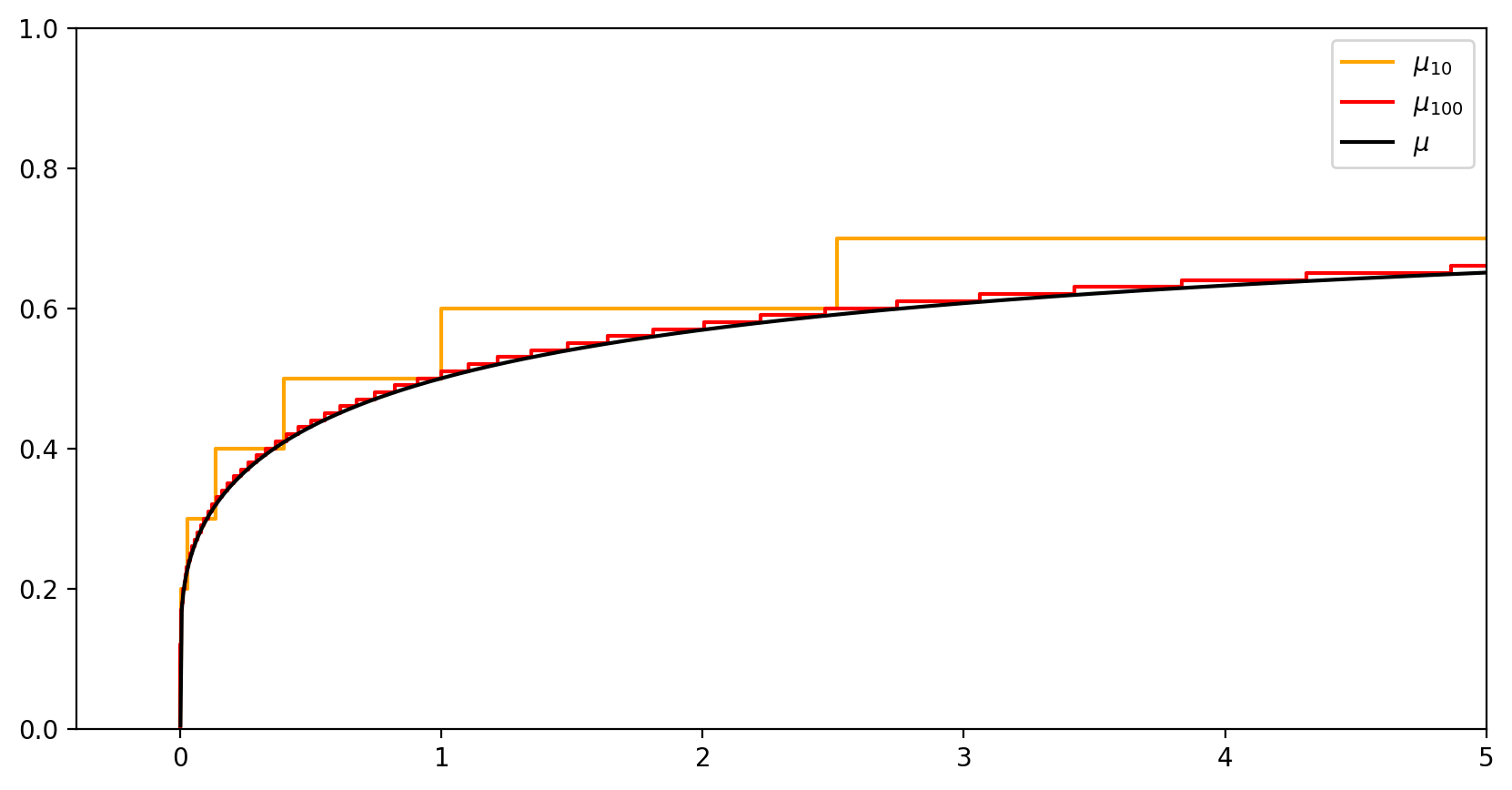}
	\caption{Cumulative distribution functions of $\mu_{10}, \mu_{100}$ and the limiting measure $\mu$.}
	\label{fig:mu}
\end{figure}

Finding the asymptotic distribution of a family of (random or deterministic) zeros is a common problem in random matrix theory, see~\cite{AGZ}, from which we borrow the method of moments and Stieltjes transforms. In a recent series of preprints~\cite{JKM1,JKM2}, Jalowy, Kabluchko and Marynych develop another method to find the limiting distribution of zeros, based on the asymptotic behaviour of coefficients for many families of polynomials, of which Eulerian polynomials are an example. Our method differs from theirs in that we rely on exact computations of moments, as in Theorem~\ref{theo:moments} below, which we believe gives a short and interesting way to prove Theorem~\ref{theo:distri}. Asymptotics of coefficients have also been used in the case of some Stirling polynomials by Elbert~\cite{Elbert}, and for some orthogonal polynomials by Lubinsky and Sidi~\cite{LS1,LS2}.

The method of moments consists in computing the moments of the sequence of measures, proving that they converge towards the moments of the limiting measure, and using a unicity argument. However, we cannot apply it directly in our case, as all moments diverge\footnote{This can be seen by noting that $\sum_{k=1}^n x_{n,k} = - A_{n,n-1} = -2^n + n + 1$, which does not scale like $n$.}. This problem can be avoided by studying instead
\begin{equation*}
	u_{n,k} = \frac{1}{1-x_{n,k}} \in (0,1]
\end{equation*}
and by computing the moments of the empirical measure of the $u_{n,k}$ instead. These moments (appropriately rescaled) happen to have a remarkable, closed expression. To state it, we denote by $C_p$ the $p$-th Cauchy number of the second kind, which can be defined as
\begin{equation*}
	C_p = \int_0^1 x (x+1) \dots (x+p-1) dx,
\end{equation*}
see for instance \cite[Definition~5.3.1]{Mezo}. The numbers $(-1)^p C_p$ are also known as Nörlund numbers~\cite{Howard}.

\begin{theorem}
	\label{theo:moments}
	For any $1\leq p \leq n$,
	\begin{equation*}
		\frac{1}{n+1} \sum_{k=1}^n u_{n,k} ^p = \frac{C_p}{p!}.
	\end{equation*}
\end{theorem}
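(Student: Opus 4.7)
The plan is to compute, in closed form, the generating functions in an auxiliary variable $t$ of both sides of the identity, and to match them modulo $t^{n+1}$.

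First, I would translate the moment sum into a logarithmic-derivative expression. Since $A_n$ factors over its roots, $A_n'(x)/A_n(x) = \sum_k (x-x_{n,k})^{-1}$, and expanding $1/(1-t u_{n,k})$ as a geometric series gives
\begin{equation*}
\sum_{p\geq 0} t^p \sum_{k=1}^n u_{n,k}^p \;=\; \sum_{k=1}^n \frac{1-x_{n,k}}{1-t-x_{n,k}} \;=\; n + t\,\frac{A_n'(1-t)}{A_n(1-t)}.
\end{equation*}
On the other side, the integral defining $N_p$ can be rewritten as $\frac{(-1)^p N_p}{p!} = \int_0^1 \binom{p-x}{p}\,dx$, and using $\sum_{p\geq 0}\binom{p-x}{p}t^p = (1-t)^{x-1}$ yields
\begin{equation*}
\sum_{p\geq 0} \frac{(-1)^p N_p}{p!}\, t^p \;=\; \int_0^1 (1-t)^{x-1}\,dx \;=\; -\frac{t}{(1-t)\log(1-t)}.
\end{equation*}

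Combining these, the theorem reduces to the congruence
\begin{equation*}
n + t\,\frac{A_n'(1-t)}{A_n(1-t)} \;\equiv\; -1 - \frac{(n+1)\,t}{(1-t)\log(1-t)} \pmod{t^{n+1}}.
\end{equation*}
For this I would invoke the classical Worpitzky-type identity $A_n(x) = (1-x)^{n+1} f_n(x)$, where $f_n(x) := \sum_{j\geq 0} j^n x^j$, whence $A_n(1-t) = t^{n+1} f_n(1-t)$ and
\begin{equation*}
t\,\frac{A_n'(1-t)}{A_n(1-t)} \;=\; -(n+1) + t\,\frac{f_n'(1-t)}{f_n(1-t)}.
\end{equation*}
The task thus becomes showing $f_n'(1-t)/f_n(1-t) = -(n+1)/[(1-t)\log(1-t)] + O(t^n)$. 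Substituting $s = -\log(1-t)$ gives $f_n(1-t) = \sum_j j^n e^{-js} = (-1)^n (d/ds)^n [1/(e^s-1)]$, and the Laurent expansion $1/(e^s-1) = 1/s + h(s)$ with $h$ analytic at the origin yields $f_n(1-t) = n!/s^{n+1} + (\text{analytic in } s)$. Taking logarithms,
\begin{equation*}
\log f_n(1-t) \;=\; \log n! - (n+1)\log s + O(s^{n+1}),
\end{equation*}
and differentiating in $t$ (using $ds/dt = 1/(1-t)$) gives the desired estimate.

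The main obstacle is this last Laurent-expansion step. The crux is that $1/(e^s-1)$ has only a \emph{simple} pole at $s=0$, so after taking $n$ derivatives and dividing out the leading singularity the relative error is exactly $O(s^{n+1})$; combined with $s=O(t)$, this is what produces the sharp cutoff at $p=n$ in the theorem. Once this bookkeeping is in place, the result follows by extracting the coefficient of $t^p$ for $1\leq p\leq n$ on both sides of the congruence.
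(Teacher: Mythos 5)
Your argument is correct, and it takes a genuinely different route from the paper. The paper's proof is purely algebraic and inductive: it computes the elementary symmetric functions of the $u_{n,k}$ in terms of Stirling numbers of the second kind (Lemma~\ref{le:symu}), proves a separate Stirling--N\"orlund identity (Lemma~\ref{le:st_n}), and runs Newton's identities by induction on $p$, with the palindromy $A(n,k)=A(n,n+1-k)$ handling the base case. You instead package all power sums at once into the logarithmic derivative $n+t\,A_n'(1-t)/A_n(1-t)$, use the Worpitzky-type identity $A_n(x)=(1-x)^{n+1}\sum_{j}j^nx^j$, and compare against the N\"orlund generating function $-t/\bigl((1-t)\log(1-t)\bigr)$, which is exactly the paper's \eqref{eq:norlund_gf} under $t\mapsto -t$. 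I checked the computational steps: the reduction to $f_n'(1-t)/f_n(1-t)=-(n+1)/\bigl((1-t)\log(1-t)\bigr)+O(t^n)$ is right, and the substitution $s=-\log(1-t)$ with the Laurent expansion $1/(e^s-1)=1/s+h(s)$, $h$ analytic, does give $f_n(1-t)=\tfrac{n!}{s^{n+1}}\bigl(1+O(s^{n+1})\bigr)$ and hence $\log f_n(1-t)=\log n!-(n+1)\log s+O(s^{n+1})$. The one step you defer --- differentiating the remainder --- is harmless: the error is an analytic function of $s$ vanishing to order $n+1$, so its $t$-derivative is $O(s^n)=O(t^n)$, and multiplying by $t$ gives agreement of coefficients through $t^n$, i.e.\ for all $1\leq p\leq n$. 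Your approach is induction-free and explains conceptually why the moments are \emph{exactly} constant up to $p=n$ (the pole of $1/(e^s-1)$ at $s=0$ is simple, so the relative correction to $f_n$ starts at order $s^{n+1}$); what it does not yield are the paper's intermediate identities for $e_{n,p}$ and the Stirling--N\"orlund relation, which have independent combinatorial interest.
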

We emphasize that the right-hand side does not depend on $n$. The choice of normalizing the moments by $n+1$ instead of $n$ comes from this remarkable property. The first values of this moment sequence are $\frac{1}{2}, \frac{5}{12}, \frac{3}{8}, \frac{251}{720}, \frac{95}{288}, \dots$ 

\begin{remark}
	The numbers $u_{n,k}$ are also the opposite of the roots of a related family of polynomials, the Fubini polynomials, which count ordered set partitions by number of blocks; see~\cite[Subsection~6.2]{Mezo}. The asymptotic version of Theorem~\ref{theo:moments} can be found in~\cite[Theorem~4.4]{JKM2}. In particular, by the method of moments, Theorem~\ref{theo:moments} implies that there is a limiting measure for the empirical measures of roots of Fubini polynomials, whose Stieljes transform we also compute in Section~\ref{sec:pr1}.
\end{remark}

\begin{remark}
	Formal computations suggest that when $n$ is even, Theorem~\ref{theo:moments} also holds for $p=n+1$. Apart from these, it seems that the identity fails for $p>n+1$ in the even case, and for $p>n$ in the odd case.
\end{remark}

In Section~\ref{sec:symu}, we proceed with the proof of Theorem~\ref{theo:moments}, which relies on computing the symmetric functions of the roots $u_{n,k}$ in terms of Stirling numbers of the second kind, and a relation between Stirling numbers and Cauchy numbers. In Section~\ref{sec:pr1} we find the asymptotic distribution of the $u_{n,k}$ \emph{via} its Stieljes transform, and deduce that of the $x_{n,k}$, proving Theorem~\ref{theo:distri}.

\section{Symmetric functions and moments of the $u_{n,k}$}
\label{sec:symu}

For $0\leq p \leq n$, we denote the elementary symmetric functions of the $\left(u_{n,k}\right)_{k=1}^n$ by
\begin{equation*}
	e_{n,p} = \sum_{1\leq i_1 < \dots < i_p \leq n} u_{n,i_1} \cdots u_{n,i_p}
\end{equation*}
and the convention $e_{n,0}=1$.

Let $S(n,k)$ denote the Stirling numbers of the second kind, that is the number of set partitions of $\{1,\dots, n\}$ into $k$ blocks.

\begin{lemma}
	\label{le:symu}
	For any $0\leq p \leq n$,
	\begin{equation}
		\label{eq:QG}
		e_{n,p} = \frac{(n-p)!}{n!} S(n+1,n-p+1).
	\end{equation}
\end{lemma}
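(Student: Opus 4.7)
The plan is to express $n!\prod_{k=1}^n(u-u_{n,k})$ as a polynomial in $u$ with explicit Stirling-number coefficients, then read off $e_{n,p}$ as a coefficient. The starting point is the classical identity
\begin{equation*}
A_n(t)=\sum_{k=1}^n k!\,S(n,k)\,t^k(1-t)^{n-k},
\end{equation*}
which follows from the generating function $A_n(t)/(1-t)^{n+1}=\sum_{m\ge 0}m^n t^m$ combined with the expansion $m^n=\sum_k k!\,S(n,k)\binom{m}{k}$ and $\sum_m\binom{m}{k}t^m=t^k/(1-t)^{k+1}$.

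Setting $t=1-1/u$ and multiplying by $u^n$ converts the right-hand side into $\sum_k k!\,S(n,k)(u-1)^k$. On the other hand, factoring $A_n(t)=\prod_k(t-x_{n,k})$ (monic, since $A(n,n)=1$) and using $u_{n,k}=1/(1-x_{n,k})$ yields
\begin{equation*}
u^n A_n(1-1/u)=\prod_k(1-x_{n,k})\cdot\prod_k(u-u_{n,k}).
\end{equation*}
Since $A_n(1)=\sum_k A(n,k)=n!$, the factor $\prod_k(1-x_{n,k})$ equals $n!$, so
\begin{equation*}
n!\,\prod_{k=1}^n(u-u_{n,k})=\sum_{k=1}^n k!\,S(n,k)\,(u-1)^k.
\end{equation*}
Expanding $(u-1)^k$ by the binomial theorem and extracting the coefficient of $u^{n-p}$ on both sides reduces the lemma to the Stirling identity
\begin{equation*}
\sum_{k=j}^n(-1)^{k-j}\binom{k}{j}\,k!\,S(n,k)=(-1)^{n-j}\,j!\,S(n+1,j+1),\qquad j=n-p.
\end{equation*}

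The main obstacle is proving this Stirling identity. I would establish it via exponential generating functions in $n$. Using $\sum_{n\ge k}S(n,k)z^n/n!=(e^z-1)^k/k!$, the EGF of the left-hand side equals
\begin{equation*}
\sum_{k\ge j}(-1)^{k-j}\binom{k}{j}(e^z-1)^k=\frac{(e^z-1)^j}{e^{z(j+1)}}=e^{-z}(1-e^{-z})^j,
\end{equation*}
where the first equality uses the negative-binomial identity $\sum_{m\ge 0}\binom{m+j}{j}(-w)^m=(1+w)^{-(j+1)}$ with $w=e^z-1$. For the right-hand side, differentiating $\sum_{m\ge 1}S(m,j+1)z^m/m!=(e^z-1)^{j+1}/(j+1)!$ in $z$ yields $\sum_{n\ge 0}S(n+1,j+1)z^n/n!=e^z(e^z-1)^j/j!$; substituting $z\mapsto -z$ and multiplying by $(-1)^j j!$ also produces $e^{-z}(1-e^{-z})^j$, so both EGFs coincide and the identity follows coefficient-wise.
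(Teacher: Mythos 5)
Your proof is correct, and it takes a genuinely different route from the paper. The paper also transforms the Eulerian polynomial (it expands $A_n(1-x)$ both as $n!\sum_p(-1)^pe_{n,p}x^p$ and via the binomial theorem) but then finishes in one line by citing the known relation $\sum_{k\geq p}\binom{k}{p}A(n,k)=(n-p)!\,S(n+1,n-p+1)$ between Eulerian numbers and Stirling numbers of the second kind. You instead start from the Frobenius formula $A_n(t)=\sum_k k!\,S(n,k)t^k(1-t)^{n-k}$ (which you derive from Worpitzky's identity), pass to the monic polynomial in $u$ with roots $u_{n,k}$, and then reduce the lemma to the Stirling identity $\sum_{k\geq j}(-1)^{k-j}\binom{k}{j}k!\,S(n,k)=(-1)^{n-j}j!\,S(n+1,j+1)$, which you prove yourself by matching exponential generating functions; I checked both EGF computations and the coefficient extraction (with $j=n-p$, the leading coefficients and the signs work out), and everything is sound. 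The trade-off is that the paper's argument is shorter because it outsources the combinatorial identity to a textbook reference, whereas yours is fully self-contained at the cost of an extra generating-function computation; your route also makes visible that the lemma is essentially equivalent to a binomial transform between the two Stirling-type triangles, which is a nice structural point the paper's citation hides.
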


\begin{proof}
	Since $A_n$ has roots at $x_{n,k}$ and is monic,
	\begin{equation*}
		\begin{split}
			A_n(1-x) & = \prod_{k=1}^n (1-x-x_{n,k}) \\
			& = \prod_{k=1}^n \left(\frac{1}{u_{n,k}}-x\right) \\
			& = n! \prod_{k=1}^n (1-xu_{n,k}) \\
			& = n! \sum_{p=0}^n (-1)^p e_{n,p}x^p ,
		\end{split}
	\end{equation*}
	where we used that the constant term is $A_n(1)=n!$. On the other hand,
	\begin{equation*}
		\begin{split}
			A_n(1-x) & = \sum_{k=1}^n \sum_{p=0}^k (-1)^p \binom{k}{p} A(n,k) x^p \\
			& = \sum_{p=0}^n (-1)^p x^p \sum_{k=p}^n \binom{k}{p} A(n,k) 
		\end{split}
	\end{equation*}
	The inner sum can be rewritten, using the symmetry $A(n,k)=A(n,n+1-k)$, so that it becomes
	\begin{equation*}
	    \sum_{l=1}^{n+1-p} \binom{n+1-l}{n+1-p-l} A(n,l).
	\end{equation*}
	Using the relation between Eulerian numbers and Stirling numbers of the second kind, see for instance \cite[Theorem~1.18]{Bona} (which can be applied after using Pascal's formula), this becomes $(n+1-p)! S(n,n-p+1) + (n-p)! S(n,n-p)$. By the standard recursion of Stirling numbers, this is also $(n-p)! S(n+1,n-p+1)$. Therefore,
	\begin{equation*}
		A_n(1-x) = \sum_{p=0}^n (-1)^p (n-p)! S(n+1,n-p+1) x^p.
	\end{equation*}

	Identifying the term in $x^p$ in both expressions yields the lemma.	
\end{proof}

\begin{remark}
	The expression for $A_n(1-x)$ in terms of Stirling numbers can also be obtained from its connection with Fubini polynomials, see for instance~\cite[(6.22)]{Mezo}.
\end{remark}

The proof of Theorem~\ref{theo:moments} relies on Newton identities, Lemma~\ref{le:symu}, and a relation between Cauchy numbers and Stirling numbers, which we state now. We use the common convention $C_0=1$.
\begin{lemma}
	\label{le:st_n}
	For any $1\leq p \leq n$,
	\begin{equation*}
		\sum_{i=0}^{n-p} (-1)^i \binom{p+i-1}{i} S(n,p+i) C_i = \frac{p}{n} S(n,p).
	\end{equation*}
\end{lemma}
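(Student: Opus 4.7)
The plan is to establish the identity by comparing exponential generating functions in $t$ with $p$ fixed, i.e.\ multiplying both sides by $t^n/n!$ and summing over $n \geq p$. A preliminary step is to compute the EGF of the Nörlund numbers. Writing $(x-1)(x-2)\cdots(x-p) = (y)_p$ (falling factorial) with $y = x-1$ and using $\sum_p (y)_p t^p/p! = (1+t)^y$, integration over $y \in [-1, 0]$ yields
\[
\sum_{p \geq 0} N_p \frac{t^p}{p!} = \int_{-1}^0 (1+t)^y\, dy = \frac{t}{(1+t) \log(1+t)}.
\]

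Next I will express each side as an explicit integral. For the left-hand side, interchanging the two sums and applying $\sum_{n \geq k} S(n,k) t^n/n! = (e^t - 1)^k/k!$ gives, with $u := e^t - 1$,
\[
L_p(t) = \sum_{i \geq 0} \binom{p+i-1}{i} N_i \frac{u^{p+i}}{(p+i)!} = \frac{u^p}{(p-1)!} \sum_{i \geq 0} \frac{N_i\, u^i}{i! (p+i)}.
\]
Using $\frac{1}{p+i} = \int_0^1 v^{p+i-1} dv$ and the Nörlund EGF above, this becomes
\[
L_p(t) = \frac{u^{p+1}}{(p-1)!} \int_0^1 \frac{v^p\, dv}{(1+uv)\log(1+uv)}.
\]
For the right-hand side, the factor $p/n$ calls for integrating: from $\sum_n S(n,p) s^{n-1}/n! = s^{-1}(e^s-1)^p/p!$ one obtains
\[
R_p(t) = p \sum_{n \geq p} \frac{S(n,p)}{n \cdot n!}\, t^n = \frac{1}{(p-1)!} \int_0^t \frac{(e^s - 1)^p}{s}\, ds.
\]

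Finally, I will match the two integrals by elementary substitutions: $w = uv$ in $L_p$ and $w = e^s - 1$, $ds = dw/(1+w)$, in $R_p$. Both reduce to
\[
\frac{1}{(p-1)!} \int_0^u \frac{w^p\, dw}{(1+w)\log(1+w)},
\]
proving $L_p(t) = R_p(t)$. Identifying the coefficient of $t^n/n!$ gives the lemma.

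The only real trick is the integral representation $\frac{1}{p+i} = \int_0^1 v^{p+i-1} dv$, which is what lets the unwieldy series $\sum_i \frac{N_i}{i!(p+i)} u^i$ be resummed using the Nörlund generating function; once that move is made, both sides are visibly the same antiderivative of $(1+w)^{-1}\log(1+w)^{-1} w^p$, and the identity is forced.
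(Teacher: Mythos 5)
Your proof is correct. It follows the same broad strategy as the paper --- encode both sides as exponential generating functions in the ``$n$'' variable, use $\sum_{n\geq k} S(n,k)t^n/n! = (e^t-1)^k/k!$, and resum against the N\"orlund generating function $\sum_q N_q t^q/q! = \frac{t}{(1+t)\log(1+t)}$ --- but the execution differs in a genuine way. The paper keeps $p$ as a second formal variable $y$ and works with the bivariate series $\exp\left(y(e^x-1)\right)$: the binomial coefficient $\binom{j-1}{j-p}$ then splits the double sum into a product of the N\"orlund series and an exponential, and both sides collapse to $y(e^x-1)\exp\left(y(e^x-1)\right)$ with no integration needed. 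You instead fix $p$, absorb the factor $\frac{1}{p+i}$ coming from $\binom{p+i-1}{i}/(p+i)!$ via the representation $\frac{1}{p+i}=\int_0^1 v^{p+i-1}\,dv$, and handle the factor $\frac{p}{n}$ on the right by integrating the Stirling EGF; both sides then become the same explicit antiderivative $\frac{1}{(p-1)!}\int_0^{e^t-1}\frac{w^p\,dw}{(1+w)\log(1+w)}$. Your route costs two integral manipulations where the paper's costs one extra variable, but it buys a concrete closed integral form for $\sum_{n\geq p}\frac{p}{n}S(n,p)\frac{t^n}{n!}$, and as a bonus you rederive the N\"orlund EGF from the integral definition of $N_p$ rather than citing it. The formal steps (interchanging the sum over $i$ with the integral, and term-by-term integration of the Stirling series) are all justified for $|t|$ small, which suffices to identify coefficients; note also that $S(n,p+i)=0$ for $i>n-p$, so extending your sum over $i$ to infinity is harmless.
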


\begin{proof}
	Let us start with the mixed bivariate generating function of the Stirling numbers of the second kind~\cite[(1.94b)]{Stanley}:
	\begin{equation}
		\label{eq:ser_st}
		\sum_{p\geq 0} \sum_{n\geq p} S(n,p) \frac{x^n}{n!} y^p = \exp\left(y(e^x-1)\right)
	\end{equation}
	with the convention that $S(n,0)=0$ for $n\geq 1$, and $S(0,0)=1$. Differentiating with respect to $y$ and then multiplying by $y$, we get
	\begin{equation}
		\label{eq:ser_lhs}
		\sum_{p\geq 1} \sum_{n\geq p} \frac{p}{n} S(n,p) \frac{x^n}{(n-1)!} y^p = y (e^x-1) \exp\left(y(e^x-1)\right)
	\end{equation}
	which is a mixed generating function of the right-hand side of the lemma. Let us compute the same function of the left-hand side, and change index $i$ into $j=p+i$:
	\begin{equation*}
		\begin{split}
			& \sum_{p\geq 1}\sum_{n\geq p}\sum_{i=0}^{n-p} (-1)^i  \binom{p+i-1}{i} S(n,p+i) C_i \frac{x^n}{(n-1)!} y^p \\
			= & \sum_{p\geq 1} \sum_{j\geq p} (-1)^{j-p}  \binom{j-1}{j-p} C_{j-p} y^p \sum_{n\geq j} S(n,j)\frac{x^n}{(n-1)!}.
		\end{split}
	\end{equation*}
	From \eqref{eq:ser_st}, we can also extract $ \sum_{n\geq j} S(n,j)\frac{x^n}{n!} = \frac{(e^x-1)^j}{j!}$, which by differenting $x$ and multiplying by $x$ gives the value of the inner sum. Thus the previous expression becomes
	\begin{equation*}
		\begin{split}
			& x e^x \sum_{p\geq 1} \sum_{j\geq p} (-1)^{j-p}  \binom{j-1}{j-p} C_{j-p} y^p \frac{(e^x-1)^{j-1}}{(j-1)!} \\
			= & x y e^x \left( \sum_{q\geq 0}(-1)^q  C_q \frac{(e^x-1)^q}{q!} \right) \left( \sum_{r \geq 0} \frac{(y(e^x-1))^r}{r!} \right)
		\end{split}
	\end{equation*}
	where we have set $q=j-p$ and $r=p-1$. The exponential generating function of Cauchy numbers of the second kind is known to be~\cite[(5.16)]{Mezo}
	\begin{equation}
		\label{eq:norlund_gf}
		\sum_{q\geq 0} (-1)^q C_q \frac{t^q}{q!} = \frac{t}{(1+t)\log(1+t)}.
	\end{equation}
	Injecting into the previous expression, we get
	\begin{equation*}
		xye^x \frac{e^x-1}{xe^x} \exp\left(y(e^x-1)\right) = y(e^x-1) \exp\left(y(e^x-1)\right)
	\end{equation*}
	which is the same as \eqref{eq:ser_lhs}.
\end{proof}

We now have all the elements to prove Theorem~\ref{theo:moments}.
\begin{proof}[Proof of Theorem~\ref{theo:moments}]
	For any fixed $n\geq 1$, we proceed by induction over $p$. Throughout the proof let us drop the $n$ from the subscript notations $e_{n,p}$, $u_{n,p}$ etc., and also denote $m_p = \sum_{k=1}^n u_{n,k}^p$.
	
	For $p=1$, first note that the Eulerian numbers satisfy $A(n,n+1-k)=A(n,k)$, which implies that for any nonzero root $x_{k}$, $\frac{1}{x_{k}}$ is also a root of $A_n$. Using this involution, and the convention $x_{n,n}=0$ so $u_{n,n}=1$, we get
	\begin{equation*}
		m_1 = 1 + \sum_{k=1}^{n-1} \frac{1}{1-x_{k}} = 1 + \sum_{k=1}^{n-1} \frac{1}{1-\frac{1}{x_{k}}} = 1 + \sum_{k=1}^{n-1} \frac{-x_{k}}{1-x_{k}}.
	\end{equation*}
	By summing the second and last expressions together, we obtain $2m_1 = n+1$ and therefore $\frac{1}{n+1}m_1 = \frac12 = C_{1}$.
	
	Now for $2 \leq p \leq n$, suppose that the formula holds for all values $1,\dots, p-1$. By Newton's identities\footnote{These identities would take a slightly different form for $p>n$, so we use the hypothesis here. Also note that one would need a more subtle version of Lemma~\ref{le:st_n} in the case $p>n$ to take into account the change of indices.}, then Lemma~\ref{le:symu} and the induction hypothesis,
	\begin{equation}
		\label{eq:newtmp}
		\begin{split}
			m_p  = & (-1)^{p-1} p e_p + \sum_{i=1}^{p-1}  (-1)^{p-1+i} e_{p-i} m_i \\
			= & (-1)^{p-1} p \frac{(n-p)!}{n!} S(n+1,n-p+1) \\
			& + (-1)^{p-1} (n+1) \sum_{i=1}^{p-1} (-1)^i \frac{(n-p+i)!}{n!} S(n+1,n-p+i+1) \frac{C_i}{i!}.
		\end{split}
	\end{equation}
	For the last sum, we use Lemma~\ref{le:st_n} with $n$ replaced by $n+1$ and $p$ by $n-p+1$, which gives
	\begin{equation*}
		\sum_{i=0}^{p} \binom{n-p+i}{i} (-1)^i S(n+1,n-p+i+1) C_i = \frac{n-p+1}{n+1} S(n+1,n-p+1)
	\end{equation*}
	or, removing the two extremal indices,
	\begin{equation*}
		\sum_{i=1}^{p-1} \binom{n-p+i}{i} (-1)^i S(n+1,n-p+i+1) C_i = - \frac{p}{n+1} S(n+1,n-p+1) - (-1)^p \binom{n}{p} C_p.
	\end{equation*}
	Injecting this into \eqref{eq:newtmp}, we have
	\begin{equation*}
		\begin{split}
			m_p = & (-1)^{p-1} p \frac{(n-p)!}{n!} S(n+1,n-p+1) \\ 
			& + (-1)^{p-1} (n+1) \frac{(n-p)!}{n!} \left( - \frac{p}{n+1} S(n+1,n-p+1) - (-1)^p \binom{n}{p} C_p \right) \\
			= & (n+1) \frac{C_p}{p!}
		\end{split}
	\end{equation*}
	which concludes the induction step.
\end{proof}

\section{Weak convergence and limiting distribution}
\label{sec:pr1}

We can now work towards the proof of Theorem~\ref{theo:distri}.

Let $\nu_n$ be the probability measure
\begin{equation*}
	\nu_n = \frac{1}{n} \sum_{k=1}^n \delta_{u_{n,k}}.
\end{equation*}
As a direct consequence of Theorem~\ref{theo:moments}, for any $p\geq 1$ the $p$-th moments of $\nu_n$ converge as $n \to \infty$ towards $\frac{C_p}{p!}$. Since the measures are all supported in the compact set $[0,1]$, they are tight and the only subsequential weak limit is the unique probability measure $\nu$ with moments
\begin{equation*}
	\forall p \geq 1, \ \int_0^1 u^p \nu(du) = \frac{C_p}{p!}.
\end{equation*}
As a result, $\nu_n$ converges weakly towards this measure $\nu$, see \cite[3.3.5]{Durrett}.

This allows for other characterizations of $\nu$, for instance via its Stieltjes transform, which can be directly computed from \eqref{eq:norlund_gf}:
\begin{equation}
	\label{eq:stieljes_nu}
	\forall t\in \C\setminus[0,1], \ S_\nu(t) = \int_0^1 \frac{1}{u-t} \nu(du) = \frac{1}{t(t-1)\log\left(1-\frac{1}{t}\right)}.
\end{equation}
where we use the principal value of the logarithm, which gives an analytic function in the domain.

Moreover, since $-x_{n,k} = \frac{1}{u_{n,k}}-1$, by the continuous mapping theorem we also get that the sequence of measures $\mu_n$ converges weakly towards $\mu$, where $\mu$ is the pushforward measure of $\nu$ by the map $u \mapsto \frac{1}{u}-1$. We can also find its Stieltjes transform by using \eqref{eq:stieljes_nu}, substitution and direct computations, which leads to
\begin{equation*}
	\begin{split}
		\forall t\in \C\setminus [0,\infty), \ S_\mu(t) & = \int_{0}^{\infty} \frac{1}{x-t} \mu(dx) \\
		& = \int_{0}^{1} \frac{1}{\frac{1}{u}-1-t} \nu(du) \\
		& = \frac{1}{t \log (-t)} - \frac{1}{t+1}.
	\end{split}
\end{equation*}
From there, by the inverse Stieltjes transform procedure, see for instantce~\cite[Theorem~2.4.3]{AGZ}, we get that for any interval $I\subset [0,\infty)$,
\begin{equation*}
	\mu(I) = \lim_{\epsilon \to 0}  \frac{1}{\pi} \int_I \Im \left( S_\mu(\lambda + i\epsilon)\right) d\lambda.
\end{equation*}
For the chosen logarithmic branch, we have $\log(-\lambda-i\epsilon) = \log\lambda - i\pi + O(\epsilon)$, from which we get $S_\mu(\lambda + i\epsilon) = \frac{\log\lambda \ + \ i\pi}{\lambda\left(\log^2\lambda \ + \ \pi^2\right)} - \frac{1}{\lambda+1} + O(\epsilon) $, with a $O$ constant uniform in $\lambda$ for $\lambda$ bounded and bounded away from $0$. Therefore, for $I=[a,b]$ with $0<a<b$, we may switch the limit and the integral, which gives
\begin{equation*}
	\mu([a,b]) = \int_a^b \frac{1}{\lambda\left(\log^2\lambda + \pi^2\right)} d\lambda
\end{equation*}
and we can extract the density of $\mu$. The fact that this is also the distribution of $\exp(\pi Z)$, where $Z$ is a standard Cauchy random variable, is a direct computation. This concludes the proof of Theorem~\ref{theo:distri}.

\begin{acks}
	I thank the anonymous referee for many interesting suggestions and remarks.	
\end{acks}



\providecommand{\bysame}{\leavevmode\hbox to3em{\hrulefill}\thinspace}
\providecommand{\MR}{\relax\ifhmode\unskip\space\fi MR }
\providecommand{\MRhref}[2]{%
	\href{http://www.ams.org/mathscinet-getitem?mr=#1}{#2}
}
\providecommand{\href}[2]{#2}

\end{document}